\newtheorem{thm}{Theorem}
\newtheorem{corollary}[thm]{Corollary} 
 \newtheorem{lemma}[thm]{Lemma}
\newtheorem{prop}[thm]{Proposition}
\theoremstyle{definition}
\def\indeq{\quad{}} 
\def\comma{ {\rm ,\qquad{}} }            
\def\commaone{ {\rm ,\quad{}} }
\def\colb{\color{black}}
\def \no#1#2#3 {{\bf #1} (#3), #2.}
\def \eds#1#2#3 {#1, #2, #3.}
\def\d{{\rm d}}
\def\Q{{\EuScript Q}}
\def\N{{\mathbb N}}
\def\:{{\colon}}
\def\be#1{\begin{equation}\label{#1}}
\def\ee{\end{equation}}
\def\<{\langle}
\def\>{\rangle}
\def\coloneqq{:=}
\newcommand{\na}{\nabla}
\newcommand{\pp}{\pi}
\newcommand{\lec}{\lesssim}
\newcommand{\bs}{\begin{split}}
\newcommand{\essss}{\end{split}}
\newcommand{\eqnb}{\begin{equation}}
\newcommand{\eqne}{\end{equation}}
\renewcommand{\ee}{\mathrm{e}}
\newcommand{\p}{\partial}
\newcommand{\RR}{\mathbb{R}}
\newcommand{\ZZ}{\mathbb{Z}}
\renewcommand{\d}{\mathrm{d}}
\renewcommand{\Q}{\mathcal{Q}}
\newcommand{\supp}{\operatorname{supp}}
\newcommand\blfootnote[1]{%
  \begingroup
  \renewcommand\thefootnote{}\footnote{#1}%
  \addtocounter{footnote}{-1}%
  \endgroup
}
\begin{document}
\title{An anisotropic regularity condition for the 3D incompressible Navier-Stokes equations for the entire exponent range}
\author{I. Kukavica, W. S. O\.za\'nski}
\date{\vspace{-5ex}}
\maketitle
\blfootnote{I.~Kukavica: Department of Mathematics, University of Southern California, Los Angeles, CA 90089, USA, email: kukavica@usc.edu\\
W.~S.~O\.za\'nski: Department of Mathematics, University of Southern California, Los Angeles, CA 90089, USA, email: ozanski@usc.edu\\ 
I.~Kukavica was supported in part by the
NSF grant DMS-1907992. W.~S.~O\.za\'nski was supported in part by the Simons Foundation. }

\begin{abstract}
We show that a suitable weak solution to the incompressible Navier-Stokes equations on ${\RR^3\times(-1,1)}$ is regular on $\RR^3\times(0,1]$ if $\p_3 u $ belongs to $M^{2p/(2p-3),\alpha } ((-1,0);L^p (\RR^3 ))$ for any $\alpha >1$ and $p\in (3/2,\infty)$, which is a logarithmic-type variation of a Morrey space in time. For each $\alpha >1$ this space is, up to a logarithm, critical with respect to the scaling of the equations, and contains all spaces  $L^q ((-1,0);L^p (\RR^3 ))$ that are subcritical, that is for which $2/q+3/p<2$.
\end{abstract}


\section{Introduction}
We address conditional regularity of suitable Leray-Hopf weak solutions to the incompressible Navier-Stokes equations (NSE),
\eqnb\label{NSE_intro}
\begin{split}
u_t - \Delta u + u\cdot \na u + \na \pp &=0 ,\\
\mathrm{div}\, u &=0,
\end{split}
\eqne
in ${\mathbb R}^{3}\times(0,T)$. Our main result is the following.

\begin{thm}\label{thm_main}
Suppose that $(u, \pp)$ is a suitable Leray-Hopf weak solution to the Navier-Stokes
equations on $\RR^3\times (-1,1)$ such that for some $\alpha>1$ and
$p\in (3/2,\infty )$ we have
\eqnb\label{def_of_morrey}
\| \p_3 u \|_{L^\frac{2p}{2p-3} (I;L^p )} \leq C_{p,\alpha } \left(\frac{-1}{\log |I|} \right)^{\alpha}
\eqne
for every $I\subset (-1,0)$ with $|I|<\frac12$. Then $u$ is regular on $\RR^3\times (-1,0]$. 
\end{thm}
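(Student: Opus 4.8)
The plan is to reduce the theorem to the a priori bound $\sup_{t\to 0^-}\|\na u(t)\|_{L^2}<\infty$ and then invoke standard parabolic bootstrapping together with the $\varepsilon$-regularity theory available for suitable weak solutions to upgrade this to full regularity on $\R^3\times(-1,0]$. Since a Leray--Hopf solution satisfies $\na u\in L^2((-1,0);L^2)$, the quantity $\|\na u(t)\|_{L^2}$ is finite for a.e.\ $t$, so one may fix a ``good'' time $t_0\in(-1,0)$ with $\|\na u(t_0)\|_{L^2}<\infty$ and attempt to propagate this control up to $t=0$. The natural vehicle is the $\dot H^1$ energy estimate: testing the momentum equation with $-\De u$ gives
\[
\tfrac12\ddt\|\na u\|_{L^2}^2+\|\na^2u\|_{L^2}^2=N,\qquad N:=\int (u\cdot\na u)\cdot\De u,
\]
the pressure dropping out by incompressibility. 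The exponent $q=2p/(2p-3)$ in \eqref{def_of_morrey} is precisely the one for which $\|\p_3u\|_{L^q_tL^p_x}$ is invariant under the scaling $u\mapsto\la u(\la\,\cdot,\la^2\cdot)$, so the estimate to follow is critical and no margin can be wasted.

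The first main step is an anisotropic bound on $N$. Splitting $\De=\p_1^2+\p_2^2+\p_3^2$ and integrating by parts in each direction (using $\div u=0$ to discard the terms $\int (u\cdot\na)\tfrac12|\p_ku|^2$), one rewrites $N$ as a sum of cubic expressions $\int \p_ku_j\,\p_ju_i\,\p_ku_i$. Every such term carrying at least one factor $\p_3u$ is estimated by H\"older with exponents $(p,r,r)$, $r=2p/(p-1)$, then Gagliardo--Nirenberg $\|\na u\|_{L^r}\lec\|\na u\|_{L^2}^{1-3/(2p)}\|\na^2u\|_{L^2}^{3/(2p)}$ and Young's inequality; since $3/p<2$ exactly when $p>3/2$, the top-order factor is absorbable and one obtains
\[
|N_{\p_3}|\le \tfrac14\|\na^2u\|_{L^2}^2+C\|\p_3u\|_{L^p}^{q}\,\|\na u\|_{L^2}^2,
\]
with the power $q=2p/(2p-3)$ emerging for free from the exponent bookkeeping. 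Were this the whole of $N$, a Gronwall argument on $(t_0,0)$ would already close using only the finiteness of $\int_{t_0}^0\|\p_3u\|_{L^p}^q$.

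The main obstacle is precisely the terms in $N$ whose three derivatives are all horizontal, i.e.\ $\sum_{i,j,l\in\{1,2\}}\int \p_lu_j\,\p_ju_i\,\p_lu_i$. These constitute the two-dimensional Navier--Stokes nonlinearity of the horizontal field $(u_1,u_2)$, carry no $\p_3$ factor, and hence cannot be controlled by $\p_3u$ directly; naively they regenerate the supercritical quantity $\|\na u\|_{L^2}^4$ that obstructs unconditional regularity. The device to treat them is incompressibility in the form $\p_1u_1+\p_2u_2=-\p_3u_3$: integrating by parts in the horizontal variables trades a horizontal derivative of the horizontal field for its divergence $-\p_3u_3$, again a $\p_3$ derivative and so controllable, at the cost of isolating a genuinely divergence-free two-dimensional piece that must then be shown to cancel or be reabsorbed. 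Because recovering $\na_h u_h$ from its divergence is a nonlocal (Riesz-transform) operation, the resulting bound is critical only up to an endpoint logarithm, and this reconciliation is where I expect the argument to be most delicate.

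To close at this borderline I would run the estimate on a dyadic family of intervals accumulating at $t=0$, say $I_k=(-2^{-k},-2^{-k-1})$, so that $|I_k|\sim 2^{-k}$ and $\log(1/|I_k|)\sim k$. On each $I_k$ the hypothesis \eqref{def_of_morrey} gives $\|\p_3u\|_{L^q(I_k;L^p)}\lec k^{-\al}$, and the logarithmic loss incurred by the two-dimensional term converts the per-interval growth of $\|\na u\|_{L^2}^2$ into a factor of the form $\exp\!\big(Ck^{-\al}\big)$; the propagation up to $t=0$ then succeeds provided $\sum_k\|\p_3u\|_{L^q(I_k;L^p)}<\infty$, i.e.\ provided $\sum_k k^{-\al}<\infty$, which is exactly the condition $\al>1$. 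Assembling the dyadic bounds yields $\sup_{t\in(t_0,0)}\|\na u(t)\|_{L^2}<\infty$, and standard higher-order estimates together with the $\varepsilon$-regularity theory for suitable weak solutions then deliver regularity on $\R^3\times(-1,0]$.
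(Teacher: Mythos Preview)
Your plan follows the $\dot H^1$ route: test with $-\Delta u$, peel off the trilinear pieces containing a $\partial_3$, and hope the divergence constraint handles the rest. This is the Kukavica--Ziane line, which the paper's introduction records as having been pushed only to $3/2<p\le 6$; the full range $p\in(3/2,\infty)$ is exactly what is new here, and your scheme does not reach it. The concrete gap is in your accounting of the bad terms. After integration by parts, $N=-\sum_{i,j,k}\int(\partial_k u_j)(\partial_j u_i)(\partial_k u_i)$, and the pieces with no $\partial_3$ are those with $j,k\in\{1,2\}$. You single out the sub-block $i,j,k\in\{1,2\}$; that one is in fact harmless, since the pointwise identity $\sum_{i,j,k=1}^2(\partial_k u_j)(\partial_j u_i)(\partial_k u_i)=(\partial_1u_1+\partial_2u_2)\,Q(\nabla_h u_h)=-(\partial_3u_3)\,Q(\nabla_h u_h)$ already produces a $\partial_3$ factor with no nonlocal operation and no logarithm. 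What you have overlooked is the block $i=3$, $j,k\in\{1,2\}$, namely $\sum_{j,k\in\{1,2\}}\int(\partial_k u_j)(\partial_j u_3)(\partial_k u_3)$, which is quadratic in $\nabla_h u_3$ and carries no $\partial_3$ at all; no integration by parts or divergence trick converts it into one. This is precisely the obstruction that forces the range restrictions on $p$ in all $\dot H^1$-type arguments, and your Riesz-transform/log-loss heuristic does not address it.

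The paper bypasses this obstruction by abandoning the $\dot H^1$ estimate entirely and working with the \emph{local} energy inequality, tested against a one-dimensional backward heat kernel $\Phi_n(x_3,t)$ times a cutoff $\eta$. Because $\nabla(\Phi_n\eta)$ points purely in the $x_3$ direction, both the cubic term $\int|u|^2\,u\cdot\nabla(\Phi_n\eta)$ and the pressure term $\int \pi\,u\cdot\nabla(\Phi_n\eta)$ pick up a $\partial_3$ automatically after one integration by parts---there is no horizontal remainder to worry about, and this is what unlocks the full range of $p$. A dyadic near/far decomposition of the pressure together with interior harmonic estimates yields an inequality $r_n^{-1}E_n\lesssim E(1+B)+\sum_{j,k<n}(r_j^{-1}E_j)^{1/2}(r_k^{-1}E_k)^{1/2}a_{jk}$ for the slab energies $E_n$, and a discrete Gronwall closes provided $\sum_k B_k<\infty$; this is where $\alpha>1$ enters, matching your intuition about dyadic summability but through a different mechanism. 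The resulting uniform bound on $r^{-1}E(r)$ then feeds a compactness/blow-up $\varepsilon$-regularity lemma (Lemma~\ref{lem_to_show}) to conclude.
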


Here we write $L^p \equiv L^p (\RR^3)$, for brevity. 

In order to put this result in a context, we note that the study of conditional regularity of the NSE goes back to Serrin, Ladyzhenskaya, and
Prodi (\cite{serrin,ladyzhenskaya,prodi}), who proved that if
$u\in L_t^{q}L_x^{p}$ holds with $2/q+3/p\leq1$, where $p\in(3,\infty]$ then the solution is regular.
On the other hand, Beir\~ao da~Veiga showed in~\cite{B} that the regularity holds if
$\nabla u\in L_t^{q}L_x^{p}$  with $2/q+3/p\leq2$ and $p\in(3/2,\infty)$.

In \cite{NP1}, Neustupa and Penel proved that boundedness of only one component of the velocity (say $u_3$) implies regularity, 
with the approach based on the evolution equation for $\omega_3$ (cf.~also \cite{NP2}).
 Afterwards, there have been many results \cite{CC,H,NNP,P,PP1,SK}, which approached the Serrin's scale invariant condition in terms of one velocity component, until a recent breakthrough paper~\cite{chae_wolf}, which  achieved the range of exponents with strict inequality $2/q+3/p<1$. A subsequent paper \cite{wang10} has improved it up to the equality, but with the Lorenz spaces replacing Lebesgue spaces for integrability in time.

As for regularity conditions in terms of $\p_3u$, 
Penel and Pokorn\'y proved in \cite{PP1} regularity under the condition
that $\partial_{3}u$ belongs to 
$ L_t^{q}L_x^{p}$ where
$2/q+3/p\leq 3/2$ and $2\leq p\leq \infty$.
The  result in \cite{KZ} then provided a  scale invariant regularity
criterion
$2/q+3/p\leq 2$,
with a restricted exponent range $9/4\leq p\leq3$. 
The method in \cite{KZ} was based on testing the equations for $(u_1,u_2)$ with $-\Delta_2 u_{1,2}$,
and an identity for $\sum_{i,j=1}^2\int u_i\partial_{i}u_j\Delta u_j$ in which every
term contains $\partial_{3}u$.
The partial regularity methods 
\cite{CKN,V,ozanskiCKN,W1}
allowed localization of this condition in \cite{KRZ_localized}.
There have been several improvements on the criteria since then;
cf.~\cite{BG,CZ,PP2,Sk1,Sk2} for a partial list of references.
In particular, in \cite{Sk2}, Skal\'ak extended the range 
for $\partial_{3}u$
to $3/2<p\leq 3$ using sharp anisotropic inequalities, and, very recently, this range has been extended to $3/2<p\leq 6$ in \cite{chen_fang_zhang}.

In this context Theorem~\ref{thm_main} provides the first conditional regularity criterion in terms of $\partial_{3} u$ covering the full range of Lebesgue exponents $3/2<p<\infty$ as well as all Lebesgue spaces $L^q_t L^p_x$ with sharp inequality $2/q+2/p<2$. To be more precise, letting \eqref{def_of_morrey} be the definition of a Morrey-type space $M^{\frac{2p}{2p-3},\alpha} ((-1 ,0 ); L^p )$, we immediately see that such Morrey space contains $L^q ((-1,0);L^p )$ for every $q>\frac{2p}{2p-3}$ (that is such that $2/q+3/p<2$), since H\"older's inequality implies
\[
\| \p_3 u \|_{L^\frac{2p}{2p-3} (I;L^p )} \leq \| \p_3 u \|_{L^q (I;L^p )} |I|^{\frac{2pq-3q-2p}{2pq}} \leq C_{p,q,\alpha } \left(\frac{-1}{\log |I|} \right)^{\alpha}
\]
for any $I\subset (-1,0)$, $|I|\leq \frac12$. Furthermore, $M^{\frac{2p}{2p-3},\alpha} ((-1 ,0 ); L^p )$ is, up to a logarithm, critical with respect to the scaling of the equations; namely letting $u_\lambda \coloneqq \lambda u (\lambda x, \lambda^2 t)$ we have
\[
\| \p_3 u_\lambda  \|_{L^{\frac{2p}{2p-3}} (I ; L^p ) } = \| \p_3 u  \|_{L^{\frac{2p}{2p-3}} (\lambda I ; L^p ) } \leq C_{p,\alpha } \left(\frac{-1}{\log |\lambda I|} \right)^{\alpha} \leq C_{p,\alpha } \left(\frac{-1}{\log |I|} \right)^{\alpha} O\left( \left( \frac{-1}{\log \lambda }\right)^\alpha \right)  
\]
as $\lambda \to 0^+$, for every $I\subset (-1,0)$ such that $|I|\leq \frac12$. 

The question whether the Morrey space $M^{\frac{2p}{2p-3},\alpha} ((-1 ,0 ); L^p )$ can be replaced by a critical Lebesgue-type space, $L^{\frac{2p}{2p-3}}((-1 ,0 ); L^p )$, without any restriction on the range of $p$ as in Theorem~\ref{thm_main}, remains an open problem.

Our approach in proving Theorem~\ref{thm_main} is inspired by the treatment of a related regularity condition in terms of one component of $u$ that was recently proved by Wang et~al in \cite{wang10}, which in turn drew from a recent result of Chae and Wolf~\cite{chae_wolf}, which introduced a new approach
based on partial regularity and testing the local energy equality with a one-dimensional backward heat kernel.

\section{Proof of Theorem~\ref{thm_main}}
Before proceeding to the proof of our main result, we recall that $(u,\pp)$ with
\eqnb\label{def_of_E_finite}
E \coloneqq \| u \|^2_{L^\infty ((-1,0);L^2)} + \| \na u \|^2_{L^2 (\RR^3\times(-1,1))}  <\infty 
\eqne
is a suitable weak solution in $\RR^{3}\times(-1,1)$ if it satisfies the equation~\eqref{NSE_intro} in the distributional sense, if $\pp = (-\Delta )^{-1} (\p_i u_j \p_j u_i)$, the strong energy inequality
\[
\int |u(t)|^2 + 2 \int_s^t |\nabla u |^2 \leq \int |u(s)|^2
\]
holds for almost all $s\in (-1,1)$ and all $t\in (s,1)$, as well as the local energy inequality
\[
\int_{\RR^3} |u(t) |^2 \phi(t) + 2  \int_{-1}^t \int_{\RR^3} | \nabla u |^2\phi \leq \int_{-1}^t \int_{\RR^3 } \left( |u|^2 (\p_t \phi + \Delta \phi )  +(|u|^2+2\pp)(u\cdot \nabla )\phi   \right)\]
holds for all $t\in (-1,1)$ and $\phi \in C_0^\infty (\RR^3\times(-1,1) ; [0,1])$. We note that, due to the global integrability assumptions on $u$, the local energy inequality can be extended to include the test functions $\phi \in C^\infty (\RR^3\times(-1,1) ; [0,1])$ that have compact support only in time and that have bounded derivatives. For $n\in \N_0$ we set $r_n\coloneqq 2^{-n}$ and 
\[\begin{split}
U_n & \coloneqq \RR^2 \times (-r_n , r_n)
\comma
Q_n \coloneqq U_n \times (-r_n^2 ,0 ).
\end{split}
\]
We also set 
\[\begin{split}
E_n &\equiv E(r_n )\coloneqq\sup_{t\in (-r_n^2,0)} \int_{U_n} | u(t) |^2 \d x + \int_{-r_n^2}^0 \int_{U_n} |\na u |^2 \d x \,\d s
\end{split}\]
and
\[\begin{split}
\Phi_n (x_3,t) &\coloneqq (4 \pi (r_n^2-t))^{-\frac12} \ee^{-\frac{x_3^2}{4(r_n^2-t)}}
   \comma x_3\in \RR
   \commaone t<0.
\end{split}\]
Note that
\eqnb\label{Phi_n_pointwise}
r_k^{-1} \lec \Phi_n \lec r_k^{-1} \quad \text{ on } Q_k
\comma k= 0,1,\ldots , n.
\eqne
Fix $p\in (3/2, \infty) $ and set
\[
B_k \coloneqq \|\p_3 u \|_{L^{\frac{2p}{2p-3}}_t L^p_x (Q_k)}.
\]
Note that 
\eqnb\label{bk_sum}
\sum_{k\geq 0} B_k \lec_{p,\alpha   } \sum_{k\geq 0 } \left(\frac{-1}{\log | r_k^2|} \right)^{\alpha} \lec_\alpha \sum_{k\geq 0 } k^{-\alpha } \lec_\alpha  1  ,
\eqne
by the assumption \eqref{def_of_morrey}. We also set $B\coloneqq \|\p_3 u \|_{L^{\frac{2p}{2p-3}} ((-1,0);L^p )}$.

In order to prove the main result, Theorem~\ref{thm_main}, we need the following localization property.

\begin{prop}\label{prop_main}
Let $(u, \pp)$ be a suitable weak solution to the NSE on $\RR^3\times (-1,1)$ satisfying \eqref{def_of_morrey}. Then
\[
\frac{1}{r_n} E_n \lec E (1+B) +  \sum_{j,k=0 }^{n-1}( r_k^{-1} E_k )^{\frac12} ( r_j^{-1} E_j )^{\frac12} a_{jk }
\]
for all $n\geq 0$, where
\[
a_{jk} \coloneqq  \chi_{k\geq j } r_k^{\frac2a } r_j^{-\frac2a} B_j +   \delta_{jk} r_k^{\frac2a}B+\delta_{jk} B_k
\]
for some $a>1$.
\end{prop}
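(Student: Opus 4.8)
I want to prove a local energy estimate on the cylinder $Q_n$ that gives control of $r_n^{-1}E_n$ in terms of the energies $r_k^{-1}E_k$ on larger cylinders, with coefficients $a_{jk}$ built from the $\partial_3 u$ norms $B_j$. The natural starting point is the local energy inequality, tested against a function built from the one-dimensional backward heat kernel $\Phi_n$, following the Chae–Wolf / Wang et al. strategy mentioned in the introduction. Concretely, I would choose a test function of the form $\phi = \Phi_n \psi$, where $\psi$ is a spatial cutoff that is $1$ on $U_n$ and supported near $U_0$, and is independent of time (or compactly supported in time only to legitimize the inequality). The key feature of $\Phi_n$ is that it is an exact solution of the backward heat equation in the $x_3$ variable, so the bulk term $\partial_t \phi + \Delta \phi$ collapses: the $(\partial_t + \partial_{33})\Phi_n$ part vanishes, leaving only contributions from the horizontal Laplacian $\Delta_2 \Phi_n$ and from derivatives hitting the cutoff $\psi$. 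This is what makes the method anisotropic and tailored to a $\partial_3 u$ condition.

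After discarding the vanishing kernel terms, the surviving right-hand side consists of the convective/pressure term $(|u|^2 + 2\pp)(u\cdot\nabla)\phi$ together with the lower-order cutoff terms. I would first peel off the cutoff-supported pieces: on the region where $\nabla\psi \ne 0$ (i.e.\ the annular region away from the axis $x_3=0$) the kernel $\Phi_n$ is bounded uniformly, so these terms are controlled by the global energy $E$ and, after invoking \eqref{bk_sum}, by $E(1+B)$—this is the source of the first term in the proposition. The main work is the convection–pressure term. Here I would split $u\cdot\nabla\phi$ and exploit that the troublesome $x_3$-derivative of $\Phi_n$ is what carries the scaling; the idea is to integrate by parts in $x_3$ to transfer a derivative onto $u$, producing a factor of $\partial_3 u$, and then estimate the resulting trilinear expression by Hölder in space with exponents adapted to $p$ (putting $\partial_3 u$ in $L^p$ and the remaining $|u|$, $\pp$, kernel factors in complementary spaces), and by Hölder in time with the conjugate exponent $2p/(2p-3)$, exactly matching the definition of $B_j$.

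To organize the output into the claimed double sum, I would dyadically decompose the integration region $Q_0$ into the annular shells $Q_k\setminus Q_{k+1}$ (and similarly track which scale $r_j$ the test-function kernel lives at), using the pointwise bounds \eqref{Phi_n_pointwise} $r_k^{-1}\lesssim\Phi_n\lesssim r_k^{-1}$ to convert kernel weights into powers of $r_k$. Each shell contributes a term in which the energy densities appear as $(r_k^{-1}E_k)^{1/2}(r_j^{-1}E_j)^{1/2}$ after a Cauchy–Schwarz splitting of the two velocity factors, and the remaining geometric/kernel weights and the $B_j$ factor assemble into $a_{jk}$, with the indicator $\chi_{k\ge j}$ recording that the kernel at scale $r_j$ only sees shells at finer scales, and the diagonal $\delta_{jk}$ terms collecting the pressure contribution (which is nonlocal and must be estimated via the Calder\'on–Zygmund bound $\pp=(-\Delta)^{-1}(\partial_iu_j\partial_ju_i)$, giving the $r_k^{2/a}B$ and $B_k$ pieces).

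I expect the main obstacle to be the pressure term. Because $\pp$ is nonlocal, its contribution on a shell $Q_k$ cannot be bounded purely by local quantities; I will need to decompose $\pp$ into a near-field part (supported on a comparable cylinder, handled by the Calder\'on–Zygmund estimate and Hölder, producing the $B_k$ term) and a far-field tail part (estimated by the decay of the Newtonian kernel away from the shell, producing the $r_k^{2/a}B$ term with the global norm $B$ and the scaling exponent $2/a$, $a>1$). Getting the exponents to close—so that the kernel weights, the Hölder exponents dictated by $p\in(3/2,\infty)$, and the time exponent $2p/(2p-3)$ all combine into precisely the stated $a_{jk}$—is the delicate bookkeeping that the careful choice of the parameter $a>1$ is designed to absorb.
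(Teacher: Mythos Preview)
Your overall architecture is right---test the local energy inequality with $\Phi_n\eta$, use that $\Phi_n$ solves the one-dimensional backward heat equation so that $(\partial_t+\Delta)(\Phi_n\eta)$ produces only harmless cutoff terms, integrate by parts in $x_3$ to manufacture a factor of $\partial_3 u$, and decompose dyadically into $Q_k\setminus Q_{k+1}$ using \eqref{Phi_n_pointwise}. (One small correction: $\Phi_n$ depends only on $(x_3,t)$, so $\Delta_2\Phi_n=0$ and $u\cdot\nabla(\Phi_n\eta)=u_3\,\partial_3(\Phi_n\eta)$; there is no horizontal Laplacian term.) The velocity cubic term then produces exactly the diagonal contribution $\sum_k r_k^{-1}E_kB_k$, and here your plan is fine.

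The genuine gap is in the pressure. You attribute the off-diagonal structure $\chi_{k\ge j}r_k^{2/a}r_j^{-2/a}B_j$ to ``the kernel at scale $r_j$ only seeing shells at finer scales'' and say the pressure gives only the diagonal $\delta_{jk}$ terms. This is backwards. There is only one kernel $\Phi_n$; the second index $j$ comes from decomposing the \emph{pressure} itself. In the paper one writes $\tfrac12\partial_3\pp=\sum_{j=0}^n p_j+q$ with $p_j\coloneqq(-\Delta)^{-1}\partial_i\partial_l(u_l\,\partial_3 u_i\,\phi_j)$ for a dyadic partition of unity $\phi_j$ in $(x_3,t)$, and the double sum arises from integrating $p_j$ over the shell at scale $k$. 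The near-diagonal block $|k-j|\le 3$ is handled by Calder\'on--Zygmund and gives $\sum_k r_k^{-1}E_kB_k$, while the tail $q$ gives the $\delta_{jk}r_k^{2/a}B$ piece; but the main off-diagonal block $k\ge j+4$ is precisely where the $\chi_{k\ge j}r_k^{2/a}r_j^{-2/a}B_j$ term originates, and it is not handled by Newtonian kernel decay alone. The crucial observation is that $p_j$ is \emph{harmonic} in the slab $U_{j+2}$, which allows the anisotropic interior estimate
\[
\|p_j\|_{L^m(\RR^2\times(-r_k,r_k))}\lesssim r_k^{1/m}r_j^{2/m-3/l}\|p_j\|_{L^l(\RR^2\times(-r_{j+2},r_{j+2}))}
\]
(this is \cite[Lemma~A.2]{chae_wolf}); this is what converts the nonlocal pressure into the geometric factor $r_k^{2/a}r_j^{-2/a}$ after a careful choice of H\"older exponents. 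A parallel decomposition of $\pp$ (not $\partial_3\pp$) is needed for the second pressure term $\int\pp\,\partial_3 u_3\,\Phi_n\eta$. Without this harmonic-function step your scheme will not close, because a generic CZ/far-field split of $\pp$ on the shell $Q_k$ does not see the scale $r_j$ at which the source $u\otimes\partial_3 u$ is localized.
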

Here $\chi_{k\geq j} = 1$ for $k\geq j$ and $0$ otherwise, and $\delta_{kj}$ denotes the Kronecker delta.
\begin{proof}[Proof of Proposition~\ref{prop_main}] Let $\eta (x_3,t)$ be such that $\supp \eta \Subset (-\frac12,\frac12 ) \times (-\frac12,0]$ and $\eta=1$ on $(-\frac14,\frac14 ) \times (-\frac1{16} , 0)$.  The local energy inequality applied with $\Phi_n \eta$, where $n\in \N$ is fixed, gives
\eqnb\label{LEI_pf}\begin{split}
 \int_{U_0}& |u(t)|^2 \Phi_n(t) \eta(t) \d x + 2\int_{-1}^t\int_{U_0} |\na u |^2 \Phi_n \eta \\
& \lec  \int_{-1}^t \int_{U_0} |u|^2 (\p_t +\Delta )(\Phi_n \eta ) + \int_{-1}^t \int_{U_0} (|u|^2 +2\pp )u\cdot \na (\Phi_n \eta )
 \end{split}
\eqne
for almost every $t\in (-1,0)$. We show below that the right-hand side can be bounded from above by a constant multiple of $E_0 +  \sum_{k=0 }^n r_k^{-1} E_k (r_k^{\varepsilon}B_0 + B_k )$, uniformly in $t$. This and the bound $\Phi_n \eta \gtrsim r_n^{-1}$ on $Q_n$ then give the claim.

For the first term on the right-hand side of \eqref{LEI_pf}, we have
\[
\begin{split}
\int_{Q_0} |u|^2 \left| (\p_t +\Delta )(\Phi_n \eta )\right|&= \int_{Q_0} |u|^2 |2 \p_3 \Phi_n  \p_3 \eta + \Phi_n \p_{33} \eta |  
\lec \int_{Q_0} |u|^2 \lec E_0,
\end{split}
\]
where we used that $\Phi_n$ satisfies the one-dimensional heat equation in~$Q_0$ in the first step, and the bounds $|\na \Phi_n |, | \Phi_n | \lec 1$ on $(\supp\, \p_3 \eta) \cap Q_0$ in the second step. 

For the velocity component of the second term on the right-hand side of \eqref{LEI_pf}, we have
\[
\begin{split}
& \int_{-1}^{t}\int_{U_0} |u|^2 u\cdot \na (\Phi_n \eta ) 
  = -2\int_{-1}^t \int_{U_0} \p_3 u \cdot u\, u_3 \Phi_n \eta 
        -  \int_{-1}^{t}\int_{U_0} |u|^2\partial_{3} u_3 \Phi_n\eta
  \\&\indeq
  \lec \sum_{k=0}^{n-1} \int_{Q_k\setminus Q_{k+1}} |\p_3 u |\, |u |^2 \Phi_n \eta  +  \int_{Q_n} |\p_3 u |\, |u |^2 \Phi_n \eta 
  \lec \sum_{k=0}^{n-1} r_k^{-1} \int_{Q_k} |\p_3 u |\, |u |^2  +  r_n^{-1} \int_{Q_n} |\p_3 u |\, |u |^2  
  \\&\indeq
  \lec \sum_{k=0}^{n} r_k^{-1} \| u \|^2_{L^{\frac{4p}3}_t L^{2p'}_x(Q_k)} \|\p_3 u \|_{L^{\frac{2p}{2p-3}}_t L^p_x (Q_k)}  \lec \sum_{k=0}^{n} r_k^{-1} E_k  B_k  
   \lec \sum_{k=0}^{n-1} r_k^{-1} E_k  B_k  
  ,
\end{split}
\]
where we have estimated the last term, $k=n$, using the term with $k=n-1$, in the last step.

As for the term in \eqref{LEI_pf} involving the pressure
$
\pp=(-\Delta )^{-1} (\p_i u_j \p_j u_i)
$, for each $k\in \N_0$
we choose
$\chi_k (x_3,t) \in 
C_0^\infty ((-r_k, r_{k})\times(-r_k^2,0];[0,1])$ such that $\chi_k =1$ on $(-r_{k+1}, r_{k+1}) \times (-r_{k+1}^2,0]$ and set
\[
\phi_j \coloneqq \begin{cases} \chi_j-\chi_{j+1}, \qquad &j=0,\ldots , n-1,\\
\chi_n, &j=n.
\end{cases}
\]
Then we may write
\eqnb\label{p_splitting}\begin{split}
\frac12 \p_3 \pp &=  (-\Delta )^{-1} \p_i \p_l ( u_l \p_3 u_i)=(-\Delta )^{-1} \p_i \p_l ( u_l \p_3 u_i \chi_0 )+(-\Delta )^{-1} \p_i \p_l ( u_l \p_3 u_i (1-\chi_0) ) \\
&=\sum_{j=0}^n(-\Delta )^{-1} \p_i \p_l ( u_l \p_3 u_i \phi_j )+(-\Delta )^{-1} \p_i \p_l ( u_l \p_3 u_i (1-\chi_0) ) =: \sum_{j=0}^n p_j + q
,
\end{split} \eqne
and thus the pressure term may be decomposed as
\eqnb\label{pressure_term_parts}
- \frac12 \int_{-1}^t \int_{U_0} \pp \,u\cdot \na (\Phi_n \eta )
  = \frac12  \int_{-1}^t \int_{U_0} \p_3 \pp \,u_3\, \Phi_n \eta 
+\frac12\int_{-1}^t \int_{U_0} \pp\partial_{3}u_3 \Phi_n \eta 
.
\eqne
Using the notation in \eqref{p_splitting}, we rewrite the first term as
\[
\begin{split} 
&\frac12  \int_{-1}^t \int_{U_0} \p_3 \pp \,u_3\, \Phi_n \eta = \sum_{j=0}^n \int_{-1}^t \int_{U_0}   p_j  \,u_3\, \Phi_n \eta  + \int_{-1}^t \int_{U_0}   q  \,u_3\, \Phi_n \eta  
\\&\indeq
 =  \sum_{k=0}^n \sum_{j= \max \{0,k-3\} }^n \int_{Q_{k}}   p_j  \,u_3\,\Phi_n \phi_k \eta  +  \sum_{j=0}^{n-4} \sum_{k= j+4 }^n \int_{Q_{k}}   p_j  \,u_3\, \Phi_n \phi_k \eta     + \int_{-1}^t \int_{U_0}   q  \,u_3\, \Phi_n \eta 
\\&\indeq
=: I_1 + I_2 + I_3 .
\end{split}
\]
For $I_1$, we note that $\sum_{j=k-3}^{n} p_j = (-\Delta )^{-1} \p_i \p_l (u_l \p_3 u_i \chi_{k-3} )$ for $k \geq 3$, which gives
\[\begin{split}
|I_1|&\lec \sum_{k=0}^n r_k^{-1} \int_{Q_{k}}   \biggl| \sum_{j=\max \{0, k-3 \}}^{n} p_j  \,u_3 \biggr|\lec \sum_{k=0}^n r_k^{-1}\| u \otimes \p_3 u \|_{L_t^{\frac{4p}{4p-3}} L^{\frac{2p}{p+1}}_x (Q_{k})} \| u \|_{L_t^{\frac{4p}{3}}L_x^{2p'}(Q_{k})}  \\
&\lec \sum_{k=0}^n r_k^{-1}\|  \p_3 u \|_{L_t^{\frac{2p}{2p-3}} L^{p}_x (Q_{k})} \| u \|^2_{L_t^{\frac{4p}{3}}L_x^{2p'}(Q_{k})}  \lec  \sum_{k=0}^{n} r_k^{-1} E_k B_k   \lec  \sum_{k=0}^{n-1} r_k^{-1} E_k B_k  ,
\end{split}
\]
as required.
For $I_2$, we note that $p_j$ is harmonic with respect to the spatial variables in $Q_{j+2}$, and thus using the anisotropic interior estimates for harmonic functions (cf.~\cite[Lemma~A.2]{chae_wolf})
we obtain
\eqnb\label{harm_est}
\| p_j \|_{L^m (\RR^2 \times (-r_{k},r_{k}))} \lec r_{k}^{\frac{1}{m}}r_j^{\frac{2}{m}-\frac3{l}}\| p_j \|_{L^l (\RR^2 \times (-r_{j+2},r_{j+2}))} 
\eqne
for all $l\in [1,m ]$. We fix any $a>\max \{ 2p/3, 4/3 \}$ and then fix any $l \in (\max \{1, 2p/(p+2) \}, \min \{ 6p/(p+6), 6a/(3a+4) \})$, which is nonempty due to our choice of $a$, to obtain 
\[
\begin{split}
|I_2| &\lec \sum_{j=0}^{n-4} \sum_{k = j+4 }^{n} r_k^{-1} \int_{Q_{k}}  | p_j  \,u | \\
&\lec \sum_{j=0}^{n-4} \sum_{k = j+4 }^{n} r_k^{-1} \| p_j \|_{L^{a'}_t L^{2}_x (Q_k)} \| u \|_{L^a_t L^2_x (Q_k)} \\
&\lec \sum_{j=0}^{n-4} \sum_{k = j+4 }^{n} r_k^{-\frac{1}{2}} r_j^{1 - \frac3l } \|u \|_{L^{\frac{2pa }{3a - 2p}}_t L_x^{\frac{lp}{p-l}} (Q_j)} \| \p_3 u \|_{L^{\frac{2p}{2p-3}}_t L^p_x (Q_j)}  \| u \|_{L^a_t L^2_x (Q_k)} ,
\end{split}
\]
where we used the harmonic estimate \eqref{harm_est}  (note that $l\leq 2 $, as required by \eqref{harm_est}) and the fact that $l<p$ in the third inequality. Note also that our choice of $l$ gives that $lp/(p-l) \in (2,6)$, which will allow us to estimate the term with the $L_x^{{lp}/(p-l)}$ norm using the energy $E$. We obtain
\[\begin{split}
|I_2| &\lec \sum_{j=0}^{n-4} \sum_{k = j+4 }^{n} r_k^{-\frac{1}{2}} r_j^{1 - \frac3l } \|u \|_{L^{\frac{4lp}{3lp -6p+6l}}_t L_x^{\frac{lp}{p-l}} (Q_j)} r_j^{\frac{3}{l}-\frac2a -\frac32} \| \p_3 u \|_{L^{\frac{2p}{2p-3}}_t L_x^p (Q_j)}  \| u \|_{L^{\infty }_t L_x^2 (Q_k)} r_k^{\frac2a  }\\
&\lec\sum_{j=0}^{n-4} \sum_{k = j+4 }^{n}  \left( r_j^{-1} E_j\right)^{\frac12}\left( r_k^{-1} E_k\right)^{\frac12}  \| \p_3 u \|_{L^{\frac{2p}{2p-3}}_t L_x^p (Q_j)}  r_k^{\frac2a } r_j^{ - \frac2a  } \\
&\lec \sum_{j=0}^{n-1} \sum_{k = j+4 }^{n-1}  \left( r_j^{-1} E_j\right)^{\frac12}\left( r_k^{-1} E_k\right)^{\frac12}  r_k^{\frac2a } r_j^{ - \frac2a  } B_j ,
\end{split}
\]
as required, where we used H\"older's inequality in time in the first inequality (hence the upper bound $l< 6a/(3a+4)$ in our choice of $l$) in order to be able to bound all norms of $u$ by $E_k^{1/2}$ or $E_j^{1/2}$ in the second inequality (where we also moved $r_k$'s and $r_j$'s around), and we have estimated the case of $k=n$ in terms of the case $k=n-1$ as well as used \eqref{def_of_morrey} in the last inequality.

The estimate on $I_3$ is analogous, but does not require summation in~$j$. Indeed, recalling \eqref{p_splitting} we see that $q$ is harmonic in $(\supp\, \eta ) \cap (U_0\times (-1,t))$, and so we perform the same estimate as in the first four inequalities in the estimate on $|I_2|$ above, but with  $Q_j$ replaced by $\RR^3 \times (-1,0)$ and without the summation in~$j$. We obtain
\[
\begin{split}
|I_3| &\lec \| u\|_{L^{\frac{4lp}{3lp-6p+6l}} \left( (-1,0);L^{\frac{lp}{p-l}} \right)}  \, \| \p_3 u \|_{L^{\frac{2p}{2p-3}} ((-1,0);L^{p})} \sum_{k= 0 }^n r_k^{-\frac12 +\frac2a }  E_{k}^{\frac12} \\
&\lec E^{\frac12} B \left( \sum_{k=0}^{n-1} r_k^{-1+\frac2a  } E_k \right)^{\frac12} \lec BE+ B \sum_{k=0}^{n-1} r_k^{-1+\frac2a } E_k,
\end{split}
\]
as required.

It remains to estimate the second term in \eqref{pressure_term_parts}. For this we apply the splitting \eqref{p_splitting} to $\pp$ (rather than to $\p_3 \pp$) to obtain
\[  \pp = \sum_{j=0}^n(-\Delta )^{-1} \p_i \p_l ( u_l  u_i \phi_j )+(-\Delta )^{-1} \p_i \p_l ( u_l  u_i (1-\chi_0) ) =: \sum_{j=0}^n \tilde p_j + \tilde q,
\]
which allows us to estimate 
\[\begin{split}
\int_{-1}^t \int_{U_0} \pp\partial_{3}u_3 \, \Phi_n \eta  &\lec \sum_{k=0}^n r_k^{-1} \int_{Q_k} | (-\Delta )^{-1} \p_i \p_m (u_iu_m \chi_{\max\{0,k-3\}} )| \,| \p_3 u | \\
&+ \sum_{j=0}^{n-4} \sum_{k=j+4}^n r_k^{-1}  \int_{Q_k} |\tilde p_j \p_3 u | +  \sum_{k=0}^n {r_k}^{-1}  \int_{Q_k} |\tilde q \,\p_3 u |  \\
&\lec  \sum_{k=0}^n  r_k^{-1} \| u \|_{L^{\frac{4p}3}_t L^{2p'}_x (Q_k)}^2 B_k \\
&+ \sum_{j=0}^{n-4} \sum_{k=j+4}^n {r_k}^{-1}  \| \tilde p_j \|_{L_t^{a'} L_x^{p'} (Q_k)} \| \p_3 u \|_{L_t^{a}L_x^p (Q_k)}  +  \sum_{k=0}^n {r_k}^{-1}  \| \tilde q \|_{L_t^{a'} L_x^{p'} (Q_k)} \| \p_3 u \|_{L_t^{a}L_x^p (Q_k)}
,
\end{split}
\]
\colb
where $a\in (1,p')$. Note such choice of $a$ implies that $a<2p/(2p-3)$. Therefore, choosing any $l\in (1, 3a/(a+2))$ we obtain 
\[\begin{split}
\int_{-1}^t \int_{U_0} \pp\partial_{3}u_3 \, \Phi_n \eta  
&\lec  \sum_{k=0}^n  r_k^{-1} \| u \|_{L^{\frac{4p}3}_t L^{2p'}_x (Q_k)}^2 B_k \\
&+ \sum_{j=0}^{n-4} \sum_{k=j+4}^n r_k^{-\frac{1}{p}}  r_j^{\frac{2}{p'}-\frac{3}l}\| \tilde p_j \|_{L_t^{a'} L_x^{l} (Q_{j+2})} r_k^{\frac{2}{a} -2+ \frac{3}p} B_k   +  \| \tilde q \|_{L_t^{a'} L_x^{l} (Q_0)}  \sum_{k=0}^n  r_k^{-\frac{1}{p}}  r_k^{\frac{2}{a} -2+ \frac{3}p} B_k \\
&\lec \sum_{k=0}^n   r_k^{-1} E_k B_k +   \sum_{j=0}^{n-4}   r_j^{\frac{2}{p'}-\frac{3}l}\| u \|^2_{L_t^{2a'} L_x^{2l} (Q_j)} B_j \sum_{k\geq j } r_k^{2\left( \frac{1}{a} -\frac1{p'}  \right) }   + EB \\
&\lec \sum_{k=0}^{n-1}   r_k^{-1} E_k B_k    + EB,
\end{split}
\]
as required, where, in the first inequality, we used the harmonic estimate \eqref{harm_est} (note that $l< p'$, as required, thanks to our choice of $a$ and $l$), as well as H\"older's inequality $\| f\|_{L^a (-r_k^2,0)} \lec \| f\|_{L^{\frac{2p}{2p-3}} (-r_k^2,0)} r_k^{\frac2a-2+\frac3p}$; in the second inequality we used the fact that $p'\in (1,3)$, which allowed us to estimate $\| u \|_{L^{\frac{4p}3}_t L^{2p'}_x (Q_k)}^2$ by $E_k$, as well as the fact that $l<p'$ to sum the last series, and the fact that $B_j$'s are nonincreasing, i.e., $B_k\leq B_j \leq B$. In the last inequality, we used H\"older's inequality $\| f \|^2_{L^{2a'}(-r_j^2 ,0)} \leq \| f \|_{L^{\frac{4l'}3}(-r_j^2 ,0)}^2  r_j^{ -1 - \frac2{a} + \frac3{l}  }$ (since $2a'< 4l'/3$ by the choice of $l$).
\end{proof}
\begin{corollary}
Under the assumptions of Proposition \ref{prop_main} we have $r^{-1} E(r)  \lec E(1+B) \mathrm{e}^{C\sum_{k,j\geq 0} a_{jk}} < \infty $ for all $r\in (0,1)$.
\end{corollary}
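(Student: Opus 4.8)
The plan is to read Proposition~\ref{prop_main} as a discrete Gronwall inequality for the dimensionless quantities $X_n \coloneqq r_n^{-1} E_n$, to deduce $\sup_n X_n <\infty$, and then to pass from the dyadic radii $r_n$ to an arbitrary $r\in(0,1)$ by monotonicity. In these variables the proposition reads
\[
X_n \lec E(1+B) + \sum_{j,k=0}^{n-1} X_j^{\frac12} X_k^{\frac12} a_{jk}.
\]
The first move is to linearize the quadratic right-hand side by Young's inequality, $X_j^{1/2}X_k^{1/2} a_{jk} \le \tfrac12 a_{jk}(X_j + X_k)$. Since $a_{jk}\ge 0$, extending the inner summation to all $j$ and collecting terms yields
\[
X_n \le C_0\, E(1+B) + C_0 \sum_{k=0}^{n-1} b_k X_k, \qquad b_k \coloneqq \tfrac12\Bigl(\sum_{j\ge 0} a_{kj} + \sum_{j\ge 0} a_{jk}\Bigr),
\]
where $C_0$ is the absolute constant hidden in $\lec$.

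The crucial observation is that the scalar weights $b_k$ are summable and that their total is exactly the quantity appearing in the claimed exponential, namely $\sum_{k\ge 0} b_k = \sum_{j,k\ge 0} a_{jk} =: S$. I would verify $S<\infty$ directly from the definition of $a_{jk}$. The two diagonal contributions give $\sum_k B_k \lec 1$ by \eqref{bk_sum} and $B\sum_k r_k^{2/a} \lec B$ by geometric summation (here $a>1$, so $2^{-2/a}<1$). For the off-diagonal term I would sum the geometric tail in $k$ first, using $\sum_{k\ge j} r_k^{2/a} \lec r_j^{2/a}$, and then apply \eqref{bk_sum}:
\[
\sum_{j,k\ge 0}\chi_{k\ge j}\, r_k^{\frac2a} r_j^{-\frac2a} B_j = \sum_{j\ge 0} B_j\, r_j^{-\frac2a}\sum_{k\ge j} r_k^{\frac2a} \lec \sum_{j\ge 0} B_j \lec 1.
\]
The same estimates show each individual $b_k$ is finite.

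With summability in hand, the standard discrete Gronwall lemma applies: writing the recursion as $X_n \le A + \sum_{k<n} \widetilde b_k X_k$ with $A = C_0 E(1+B)$ and $\widetilde b_k = C_0 b_k$, induction together with the telescoping identity $\sum_{k<n} \widetilde b_k\prod_{m<k}(1+\widetilde b_m) = \prod_{k<n}(1+\widetilde b_k)-1$ gives
\[
X_n \le A \prod_{k=0}^{n-1}(1+\widetilde b_k) \le A\, \mathrm{e}^{C_0 \sum_{k\ge 0} b_k} = C_0\, E(1+B)\, \mathrm{e}^{C_0 S},
\]
uniformly in $n$; the right-hand side is finite precisely because $S<\infty$. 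Finally, for general $r\in(0,1)$ I would choose $n$ with $r_{n+1}\le r\le r_n$ and use that $r\mapsto E(r)$ is nondecreasing (both the slab $U(r)$ and the time interval $(-r^2,0)$ grow with $r$), whence $r^{-1}E(r) \le r_{n+1}^{-1} E(r_n) = 2 X_n$. Combined with the uniform bound this yields $r^{-1}E(r) \lec E(1+B)\,\mathrm{e}^{C\sum_{j,k\ge 0} a_{jk}}<\infty$, as claimed.

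The only genuine content is the bookkeeping that, after Young's inequality, the resulting coefficients $b_k$ sum to $\sum_{j,k} a_{jk}$ and that this sum is finite; the remaining steps (Young's inequality, the discrete Gronwall lemma, and the comparison of $E(r)$ with $E_n$) are routine. I expect the point to watch to be the off-diagonal weight $\chi_{k\ge j} r_k^{2/a} r_j^{-2/a} B_j$, whose geometric tail in $k$ must be summed before one may invoke the summability \eqref{bk_sum} of the $B_j$.
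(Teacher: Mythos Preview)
Your proof is correct and follows essentially the same route as the paper: apply Young's inequality to linearize the quadratic sum, invoke the discrete Gronwall lemma with weights $b_k=\sum_j(a_{kj}+a_{jk})$, and pass from dyadic $r_n$ to general $r$ by comparison with a neighboring $r_n$. You supply more detail than the paper in verifying $\sum_{j,k}a_{jk}<\infty$ (the paper simply cites \eqref{bk_sum}), but the strategy is identical.
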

\begin{proof}
Recall that if $b_n, x_n\geq 0$ and $C>0$ are such that $x_0\leq C$ and $
x_n \leq C + \sum_{k=0}^{n-1} b_k\,x_k$ for all $n\geq 1$, then
\[
x_n \leq C\, \ee^{ \sum_{k\geq 0} b_k } \qquad \text{ for all }n\geq 1,
\]
by the discrete Gronwall inequality (see Lemma A.1 in \cite{wang10} for a proof). Letting $x_n \coloneqq r_n^{-1} E_n $ and $b_n \coloneqq \sum_{k\geq 0} (a_{kn}+a_{nk} )$ and using Young's inequality $ab\leq (a^2+b^2)/2$, Proposition \ref{prop_main} gives
\[
x_n \lec E(1+B) + \sum_{k,j=0}^{n-1} x_k^{\frac12} x_j^{\frac12} a_{jk} \lec E(1+B) + \sum_{k=0}^{n-1} x_k b_{k}
\]
for each $n\geq 1$. Since \eqref{bk_sum} implies that $\sum_{k\geq 0} b_k <\infty$, we obtain the claim for $r=r_n$, where $n\geq 0$. The claim for other $r$ follows by approximating with a neighboring $r_n$. 
\end{proof}
We note in passing that in the proof above we have in fact used a discrete Gronwall inequality of the form $x_n \leq C \ee^{\sum_{i,j\geq 0} a_{ij}}$ whenever $(x_n)_{n\geq 0}$ is a nonnegative sequence such that $x_0\leq C$ and $x_n \leq C + \sum_{i,j= 0}^{n-1} a_{i,j} x_i^{1/2} x_j^{1/2}$ for $n>0 $, and the coefficients $a_{ij}\geq 0$ are such that $\sum_{i,j} a_{ij} < \infty$.

We can now prove our main result.
\begin{proof}[Proof of Theorem~\ref{thm_main}]
By the above corollary, $  r^{-1} E(r)  \leq C_{E,B}$ for all $ r\in (0,1)$. Since  
\[
\| \p_3 u \|_{L^{\frac{2p}{2p-3}} ((-r^2 ,0); L^{p}(B_r))} \to 0 \quad \text{ as }\quad r\to 0, 
\]
the next lemma gives that $(0,0)$ is a regular point of $u$, in the sense that $u$ is essentially bounded in $(-\rho^2 , 0) \times B(0,\rho )$ for some $\rho $. Regularity at any other point in $\RR^3$ at $t=0$ follows analogously, by translating and rescaling $U_n$, $Q_n$, $A_n$, and $E_n$. We now show that this implies that $u$ is regular on $(-1,0]$. Indeed, note that, due to the existence of intervals of regularity of any Leray-Hopf weak solution (see Theorem 6.41 in \cite{OP}, for example) we can assume, by rescaling, that $u$ is regular on $(-1,0)$. The fact that $u$ is regular at every point $x\in \RR^3$ at $t=0$ shows that $\| u(t) \|_{L^\infty (B_R )}$ remains bounded as $t\to 0^-$ for each $R>0$. 

Due to the partial regularity theory of Caffarelli-Kohn-Nirenberg, there exists $\epsilon >0$ such that if $\int_{-1}^0 \int_{B_1(x)} \left( |u|^3 + |p|^{3/2} \right) \leq \epsilon $ then $|u| \leq C(\epsilon )$ on $(-1/4,0)\times B_{1/2} (x)$, where $C(\epsilon ) >0$ is independent of $x$ (see Theorem 2.2 in \cite{ozanskiCKN}, for example). Let $\{ B_1 (x_n ) \}_{n\geq 1 } $ be a cover of $\RR^3$ such that $x_n \in \ZZ^3/4$ for each $n\geq 1$. Since \eqref{def_of_E_finite} together with interpolation and the Calder\'on-Zygmund inequality imply that $|u|^3+|p|^{3/2} \in L^1 ((-1,0 )\times \RR^3 )$, we see that $\int_{-1}^0 \int_{B_1(x_n)} \left( |u|^3 + |p|^{3/2} \right) > \epsilon $ for only finitely many $n$'s. Thus there exists $R>0$ such that $|u| \leq C (\epsilon )$ on $(-1/2,0) \times \{ |x|>R \}$. Hence $\| u(t) \|_{L^\infty (\RR^3 )}$ remains bounded as $t\to 0^-$, and so regularity of $u$ persist beyond $t=0$, due to the classical Leray estimates (see Corollary 6.25 in \cite{OP}, for example). This concludes the proof of Theorem~\ref{thm_main} once we establish the next lemma.
\end{proof}
Let us introduce some notation. Given a suitable weak solution $(u,\pi )$ we denote by 
\[
\Q_r \coloneqq B_r \times (-r^2,0) 
\]
the finite cylinder of radius $r$ and set
\[\begin{split}
P(\pi ,r) &\coloneqq \frac{1}{r^2} \int_{\Q_r } |\pi |^{\frac32},\hspace{2.7cm} 
C(u,r) \coloneqq \frac{1}{r^2} \int_{\Q_r } |u|^{3},\\
A(u,r) &\coloneqq \frac{1}{r} \sup_{t\in(-r^2 , 0]} \int_{B_r } |u(t)|^2 \d x, \qquad E(u,r) \coloneqq \frac1{r} \int_{\Q_r} |\na u |^2 .
\end{split}
\]
We may now state the lemma that we used above.
\begin{lemma}[Conditional local regularity]\label{lem_to_show}
Given $M>0$ there exists $\varepsilon (M)>0$ with the following property: If $(u,\pi)$ is a suitable weak solution in $\Q_1$ such that  
\[
\sup_{r\in (0,1)}\frac1{r}\left( \sup_{t\in(-r^2 , 0]} \int_{B_r } |u(t)|^2 \d x+\int_{\Q_r} |\na u |^2 \right)   \leq M <\infty
\]
then $(0,0)$ is a regular point provided
\[
r_0^{2-\frac3{b}-\frac2{a} } \| \p_3 u \|_{L^{a} ((-r_0^2 ,0); L^{b} (B_{r_0}))} \leq \varepsilon (M)
\]
for some $r_0\in (0,C_0(u,\pi))$ and $a,b\geq 1$. 
\end{lemma}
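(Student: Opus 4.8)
The plan is to reduce everything to the Caffarelli--Kohn--Nirenberg $\varepsilon$-regularity criterion already used in the proof of Theorem~\ref{thm_main}: there is an absolute $\varepsilon_0>0$ such that if $C(u,r)+P(\pi,r)\le\varepsilon_0$ at some scale $r$, then $(0,0)$ is regular. By the scaling invariance of all the quantities involved we may normalize $r_0=1$, so that the task becomes: from the uniform bound $A(u,r)+E(u,r)\le M$ for all $r\in(0,1)$ and the smallness of the scale-invariant quantity $\|\partial_3 u\|_{L^a((-1,0);L^b(B_1))}\le\varepsilon(M)$, deduce that $C(u,r)+P(\pi,r)$ dips below $\varepsilon_0$ at a suitable small scale.

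The heart of the matter is the velocity excess $C(u,r)$. A naive interpolation gives only $C(u,r)\lesssim M^{3/2}$, i.e.\ boundedness with no smallness, so the anisotropy must be exploited. I would use a sharp anisotropic interpolation inequality of the type
\[
\int_{\RR^3}|u|^3\lesssim \|u\|_{L^2}^{3/2}\,\|\partial_1 u\|_{L^2}^{1/2}\,\|\partial_2 u\|_{L^2}^{1/2}\,\|\partial_3 u\|_{L^2}^{1/2}
\]
(and its $L^b$ variants from \cite{Sk2,chen_fang_zhang}), in which every cubic term carries exactly one factor of $\partial_3 u$. Combined with a mean/oscillation splitting $u=\bar u+\tilde u$ in the $x_3$-variable, where $\tilde u$ has zero vertical average and is therefore controlled by $\partial_3 u$ through a one-dimensional Poincar\'e inequality, this converts the estimate into one governed by the \emph{small} scale-invariant norm of $\partial_3 u$. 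Integrating in time with H\"older exponents matched to $(a,b)$ and using the energy bound for the remaining factors, I would arrive at a bound of the shape $C(u,r)\lesssim_M\big(r^{2-3/b-2/a}\|\partial_3 u\|_{L^a_tL^b_x(\Q_r)}\big)^{\gamma}$ with $\gamma>0$, which is $\le\varepsilon_0/2$ once $\varepsilon(M)$ is small.

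The pressure excess $P(\pi,r)$ is then subordinate to the velocity. I would split $\pi=\pi_{\loc}+\pi_{\tail}$ with $\pi_{\loc}=(-\Delta)^{-1}\partial_i\partial_j(u_iu_j\chi)$ for a cutoff $\chi$ adapted to $\Q_r$ and $\pi_{\tail}$ harmonic in the spatial variables on $\Q_r$, exactly as in the splitting \eqref{p_splitting}. The Calder\'on--Zygmund inequality gives $\|\pi_{\loc}\|_{L^{3/2}}\lesssim\|u\|_{L^3}^2$, so $P(\pi_{\loc},r)\lesssim C(u,2r)$ is small by the previous step, while the anisotropic interior estimates for harmonic functions \eqref{harm_est} transfer the size of $\pi_{\tail}$ from a fixed coarse scale down to $\Q_r$ with a favorable power of $r$; summing the resulting dyadic tail as in \eqref{bk_sum} keeps it $\le\varepsilon_0/2$, using only the uniform bound $M$. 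Putting the two estimates together yields $C(u,r)+P(\pi,r)\le\varepsilon_0$ at a small scale, and the CKN criterion concludes.

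The step I expect to be the main obstacle is the velocity estimate: the spatial cutoffs needed to apply the anisotropic inequality on a ball produce error terms of the form $r^{-1}u$ (in place of $\partial_3 u$) that do \emph{not} see $\partial_3 u$ and are only $O(M)$, so no single application of the inequality yields smallness. Controlling them forces a dyadic iteration over the scales $r_n$, in which these errors are absorbed against the geometric scale factors and the smallness of $\partial_3 u$ that is available at \emph{every} small scale (since the scale-invariant norm tends to $0$ as $r\to0$). This bookkeeping, together with the correct matching of the time--space H\"older exponents to the given pair $(a,b)$ over the whole range $p\in(3/2,\infty)$, is the delicate technical core and mirrors the summation carried out in Proposition~\ref{prop_main}.
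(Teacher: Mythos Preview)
Your approach is genuinely different from the paper's, and the key step you flag as ``the main obstacle'' is indeed a real gap rather than just a technicality.

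The paper does \emph{not} attempt to prove a quantitative bound of the form $C(u,r)\lesssim_M\varepsilon^\gamma$. Instead it argues by contradiction and compactness: assuming the lemma fails, one obtains a sequence $(u^k,\pi^k)$ with $(0,0)$ singular, rescales to unit scale, and uses Aubin--Lions to pass to a limit $(v,q)$ which is a suitable weak solution with $\partial_3 v\equiv 0$. The localized anisotropic criterion of \cite{KRZ_localized} then gives that $v$ and $\nabla v$ are bounded on $\Q_{1/4}$; elliptic regularity upgrades $q$, so $r^{-2}\int_{\Q_r}(|v|^3+|q|^{3/2})\to 0$ as $r\to 0$. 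Combined with the strong $L^3$ convergence $v^k\to v$ and weak convergence of $q^k$, this contradicts the CKN lower bound $r^{-2}\int_{\Q_r}(|v^k|^3+|q^k|^{3/2})\ge\varepsilon_0$ coming from the assumed singularity.

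Your direct route runs into a structural problem: the inequality you are aiming for, $C(u,r)\le\varepsilon_0/2$ at some scale $r=r(M,\varepsilon)$ depending only on $M$ and $\varepsilon$, cannot follow from the anisotropic interpolation plus cutoffs alone. As you yourself note, the localization produces a term $r^{-1}\|u\|_{L^2}$ in place of $\|\partial_3 u\|$, and this term is scale-invariant of size $M^{1/2}$---it carries no smallness and no decay in $r$. The mean/oscillation splitting does not help with the $\bar u$ piece: $\bar u$ is a genuinely two-dimensional function that does not solve 2D Navier--Stokes (averaging does not commute with the nonlinearity), so there is no separate smallness mechanism for $\int|\bar u|^3$. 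The ``dyadic iteration'' you propose would have to absorb an $O(M^{3/2})$ term at every scale, and nothing in the hypotheses provides a geometric gain to beat that. This is precisely why the earlier direct approaches (\cite{KZ,Sk2,chen_fang_zhang}) are restricted to limited ranges of $p$: the algebraic identities that make the cutoff terms cancel only close for certain exponents. The compactness argument sidesteps the issue entirely by reducing to the qualitative case $\partial_3 v=0$ and invoking \cite{KRZ_localized} as a black box.
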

\begin{proof}[Proof of Lemma~\ref{lem_to_show}.]
Note that, by interpolation (cf.~\cite[Lemma~2.1]{ozanskiCKN}, for example), the first assumption gives
\[
C(u,r) \lec M
\]
for every $r\in (0,1)$. We shall show the claim with
\[
C_0 (u,\pi )\coloneqq \min \left\{ \frac12 , \bigl(C(u,1)+P(\pi ,1)\bigr)^{-2} \right\}.
\]
Suppose that the claim does not hold. Then there exists a sequence $(u^k, \pi^k )$ and $r_k\in (0, C_0 (u^k,\pi^k))$ such that
\[
C(u^k,r)\lec M\quad \text{ and }\quad
 r_k^{2-\frac3{b}-\frac2{a} } \| \p_3 u^k \|_{L^{a}_t L^{b}_x  (\Q_{r_k} ) } \leq \frac1{k}
\]
while $(0,0)$ is a singular point of $u^k$ for every $k$. Using \cite[Lemma~A.2]{wang_zhang_2014}, we obtain
\[
A(u^k , r) + E(u^k ,r) + P(\pi^k ,r) \leq C(M)
,
\]
for all $r\in(0,r_k)$. In order to relax the restriction on the range of $r$ we apply the rescaling
\[
v^k (x,t) \coloneqq r_k u^k (r_k x , r_k^2 t)
\comma
 q^k (x,t) \coloneqq r_k^2 \pi^k (r_k x , r_k^2 t)
\]
to obtain
\eqnb
\notag
A(v^k , r) + E(v^k ,r) + P(q^k ,r) + C(v^k ,r) + k \| \p_3 v^k \|_{L^a_t L^b_x (\Q_1 )} \leq C(M)
\eqne
for all $r\in(0,1)$. This estimate on $(v^k,q^k)$ together with the Aubin-Lions Lemma (cf.~\cite[Theorem~4.12]{NSE_book}, for example) is sufficient to extract a subsequence, which we relabel, such that
\[
v^k \to v \text{ in } L^3 (\Q_{1/2}), \quad q^k \rightharpoonup q \text{ in } L^{\frac32} (\Q_{1/2}), \quad \text{ and } \quad \p_3 v^k \to 0 \text{ in } L^a_t L^b_x (\Q_1 ),
\]
where $(v,q)$ is a suitable weak solution to the Navier-Stokes equations on $Q_{1/2}$ such that $\p_3 v=0$. 
 It follows that 
$v$ and $\nabla v$ are bounded functions in $\Q_{1/4}$ due to the localized regularity condition on $\p_3 v$ of \cite{KRZ_localized}.
Since also
$\int_{\Q_{1/2}}|q|^{3/2}<\infty$, we get, using the elliptic regularity
on the equation $-\Delta q=\partial_{i}v_j\partial_{j}v_i\in L^\infty(\Q_{1/4})$ at almost every time $t\in(-1/4,0]$, that
$\Vert q\Vert_{W^{2,p}(B_{1/4})}\lec \Vert \nabla v\Vert_{L^{2p}(B_{1/4})}^2+\Vert q\Vert_{L^{3/2}(B_{1/4})}$ 
for every $p\in(1,\infty)$ and
almost every $t\in(-1/4,0]$ \cite[Theorem~9.11]{GT}. Using this statement with $p$ sufficiently large, we obtain
$ q\in {L_t^{3/2}L_{x}^{\infty}(\Q_{1/8})}$.
This immediately implies
$r^{-3}\int_{\Q_r}|q|^{3/2}<\infty$ for $r\in(0,1/8)$,
and consequently for every $r\in (0,1/16)$ we have
\[\begin{split} 
\epsilon &\leq \liminf_{k\to \infty} \frac{1}{r^2} \int_{\Q_r } \left( |v^k|^3 + |q^k|^{\frac32} \right) 
= 
\frac{1}{r^2} \int_{\Q_r } \left( |v|^3 + |q|^{\frac32} \right) 
\leq
C_{v,q} (r^{3}+r) \leq C_{v,q} r
,
\end{split}
\]
where $\epsilon>0$ is given by the Caffarelli-Kohn-Nirenberg condition (cf.~\cite[Theorem~2.2]{ozanskiCKN})
and $C$ depends on $M$ and 
the uniform bound of $v$ and $\nabla v$ on $\Q_{1/4}$.
The above inequality leads to a contradiction when we send $r\to0$, concluding the proof.
\end{proof}

\colb

\end{document}